\title{Some Notes on Summation by Parts Time Integration Methods}
\author{Hendrik Ranocha}
\date{8th March 2019}
\declaretheoremstyle[
  bodyfont=\normalfont\itshape,
  headformat=\NAME\ \NUMBER\NOTE,
]{myplain}
\declaretheoremstyle[
  headformat=\NAME\ \NUMBER\NOTE,
]{mydefinition}
\declaretheorem[style=myplain,numberwithin=section]{theorem}
\declaretheorem[style=mydefinition,numberlike=theorem]{definition}
\declaretheorem[style=mydefinition,numberlike=theorem]{remark}
\declaretheorem[style=mydefinition,numberlike=theorem]{example}
\declaretheorem[style=mydefinition,numberlike=theorem]{assumption}
\pgfplotsset{compat=1.13}
\newcommand{\diag}{\operatorname{diag}}
\newcommand{\scp}[2]{\left\langle{#1,\, #2}\right\rangle}
\DeclarePairedDelimiterX\newset[1]\lbrace\rbrace{\setaux #1||\endsetaux}
\def\setaux#1|#2|#3\endsetaux{\if\relax\detokenize{#2}\relax #1 \else #1 \;\delimsize\vert\; #2 \fi}
\renewcommand{\set}[1]{\newset*{#1}}
\newcommand{\I}{\operatorname{I}}
\NewDocumentCommand{\ENO}{o}{%
  \IfValueTF{#1}{%
    ENO$(#1)$
  }{%
    ENO
  }%
}
\NewDocumentCommand{\mENO}{o}{%
  \IfValueTF{#1}{%
    mENO$(#1)$
  }{%
    mENO
  }%
}
  \let\rho\varrho
  \let\phi\varphi
  \let\epsilon\varepsilon
\newcommand{\N}{\mathbb{N}}
\newcommand{\R}{\mathbb{R}}
\renewcommand{\C}{\mathbb{C}}
\renewcommand{\Re}{\operatorname{Re}}
\newsavebox{\DelimiterBox}
\newlength{\DelimiterHeight}
\newlength{\DelimiterDepth}
\newsavebox{\ArgumentBox}
\newlength{\ArgumentHeight}
\newlength{\ArgumentDepth}
\newlength{\ResizedDelimiterHeight}
\newlength{\ResizedDelimiterDepth}
\begin{document}

\maketitle

\begin{abstract}
  
Some properties of numerical time integration methods using summation by parts (SBP)
operators and simultaneous approximation terms are studied. These
schemes can be interpreted as implicit Runge-Kutta methods with desirable stability
properties such as $A$-, $B$-, $L$-, and algebraic stability \cite{nordstrom2013summation,
lundquist2014sbp, boom2015high, ruggiu2018pseudo}. Here, insights into
the necessity of certain assumptions, relations to known Runge-Kutta methods,
and stability properties are provided by new proofs and counterexamples.
In particular, it is proved that a) a technical assumption is necessary since
it is not fulfilled by every SBP scheme, b) not every Runge-Kutta scheme having
the stability properties of SBP schemes is given in this way, c) the classical
collocation methods on Radau and Lobatto nodes are SBP schemes, and d) nearly
no SBP scheme is strong stability preserving.

\end{abstract}

\section{Known Results on SBP SAT Schemes}
\label{sec:known-results}

In order to solve an ordinary differential equation (ODE)
\begin{equation}
\label{eq:ODE}
  \forall t \in (0,T)\colon u'(t) = f(t, u(t)),
  \qquad
  u(0) = u_0,
\end{equation}
a grid $0 \leq \tau_1 < \dots < \tau_s \leq T$ is introduced and the numerical
solution is approximated pointwise as $u_i = u(\tau_i)$ and $f_i = f(\tau_i, u_i)$.
Summation by parts (SBP) operators can be defined as follows, cf. \cite{svard2014review,
fernandez2014review, fernandez2014generalized}.
\begin{definition}
  An SBP operator of order $p \in \N$ on $[0, T]$ consists of
  \begin{itemize}
    \item
    a discrete operator $D$ approximating the derivative $D u \approx u'$ with
    order of accuracy $p$,

    \item
    a symmetric and positive definite discrete mass/norm matrix $M$ approximating
    the $L^2$ scalar product $u^T M v \approx \int_{0}^{T} u(t) v(t) \dif t$,

    \item
    and interpolation vectors $t_L, t_R$ approximating the interpolation to the
    boundary as $t_L^T u \approx u(0)$, $t_R^T u \approx u(T)$ with order of
    accuracy at least $p$, such that
  \end{itemize}
  \begin{equation}
  \label{eq:SBP}
    M D + D^T M = t_R t_R^T - t_L t_L^T.
  \end{equation}
\end{definition}

SBP operators mimic integration by parts discretely via the summation by parts
property \eqref{eq:SBP}.
An SBP time discretisation using a simultaneous approximation term (SAT) of
\eqref{eq:ODE} with parameter $\sigma \in \R$ is \cite{nordstrom2013summation,
lundquist2014sbp, boom2015high}
\begin{equation}
  D u = f + \sigma M^{-1} t_L \bigl( u_0 - t_L^T u \bigr).
\end{equation}
Most stability results have been achieved for the choice $\sigma = 1$, i.e.
\begin{equation}
\label{eq:discretisation}
  D u = f + M^{-1} t_L \bigl( u_0 - t_L^T u \bigr).
\end{equation}
Hence, this discretisation will be considered in the following.
The numerical solution at $t=T$ is given by $t_R^T u$, where $u$ solves
\eqref{eq:discretisation}. The interval $[0,T]$ can also be partitioned into
multiple subintervals/blocks such that multiple steps of this procedure are used
sequentially.

In order to guarantee that \eqref{eq:discretisation} can be solved for a dissipative
linear scalar problem, the following assumption is introduced \cite{nordstrom2013summation}.
\begin{assumption}
\label{assumption}
  For $\sigma > \frac{1}{2}$, all eigenvalues of $D + \sigma M^{-1} t_L t_L^T$
  have strictly positive real part.
\end{assumption}
The following characterisation of \eqref{eq:discretisation} as Runge-Kutta method
has been developed in \cite{boom2015high}.
\begin{theorem}
\label{thm:SBP-SAT-as-RK}
  If assumption~\ref{assumption} is satisfied, \eqref{eq:discretisation} is equivalent
  to an implicit Runge-Kutta method with the following Butcher coefficients,
  where $1$ denotes also the vector $(1, \dots, 1)^T \in \R^s$.
  \begin{equation}
  \label{eq:SBP-SAT-as-RK}
    A = \frac{1}{T} (D + M^{-1} t_L t_L^T)^{-1} = \frac{1}{T} (M D + t_L t_L^T)^{-1} M,
    \quad
    b = \frac{1}{T} M 1,
    \quad
    c = \frac{1}{T} (\tau_1, \dots, \tau_s)^T.
  \end{equation}
\end{theorem}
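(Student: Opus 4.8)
The plan is to recast the SBP--SAT discretisation \eqref{eq:discretisation} into the canonical form of a Runge--Kutta method with step size $T$, stage vector $u = (u_1, \dots, u_s)^T$, and right-hand side $f = (f_1, \dots, f_s)^T$. First I would collect on the left all terms containing $u$, rewriting \eqref{eq:discretisation} as
\[
  (D + M^{-1} t_L t_L^T)\, u = f + M^{-1} t_L u_0 .
\]
Choosing $\sigma = 1 > \frac{1}{2}$ in Assumption~\ref{assumption} ensures that every eigenvalue of $D + M^{-1} t_L t_L^T$ has strictly positive real part, so this matrix is invertible and $u = (D + M^{-1} t_L t_L^T)^{-1} (f + M^{-1} t_L u_0)$. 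Comparing with the Runge--Kutta stage equation $u = u_0 \, 1 + T A f$ (in which $\tau_i = T c_i$ fixes $c = \frac{1}{T}(\tau_1, \dots, \tau_s)^T$) identifies $A = \frac{1}{T} (D + M^{-1} t_L t_L^T)^{-1}$; the two stated expressions for $A$ agree because $D + M^{-1} t_L t_L^T = M^{-1} (M D + t_L t_L^T)$.

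Two consistency checks then remain, and both rest on the standard SBP facts that $D$ annihilates constants, $D 1 = 0$, and that the boundary vectors reproduce constants exactly, $t_L^T 1 = t_R^T 1 = 1$, which follow from the accuracy conditions in the definition. The first check is that the forcing by $u_0$ reproduces the expected $u_0 \, 1$: I must verify $T A M^{-1} t_L = 1$, equivalently $M^{-1} t_L = (D + M^{-1} t_L t_L^T) 1 = D 1 + M^{-1} t_L (t_L^T 1)$, which is immediate from $D 1 = 0$ and $t_L^T 1 = 1$.

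The step I expect to be the real obstacle is matching the numerical output $t_R^T u$ with the Runge--Kutta update $u_0 + T b^T f$, and here the summation-by-parts property \eqref{eq:SBP} does the essential work. Multiplying the stage equation by $M$ gives $(M D + t_L t_L^T) u = M f + t_L u_0$; left-multiplying by $1^T$, I substitute $M D = -D^T M + t_R t_R^T - t_L t_L^T$ from \eqref{eq:SBP}. The term $1^T D^T M = (D 1)^T M$ vanishes, the $-1^T t_L t_L^T = -t_L^T$ produced cancels the $+ 1^T t_L t_L^T = + t_L^T$ already present, and $1^T t_R = 1^T t_L = 1$ then reduce the identity to $t_R^T u = u_0 + 1^T M f$. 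Since $M$ is symmetric, $b = \frac{1}{T} M 1$ gives $T b^T f = 1^T M f$, so $t_R^T u = u_0 + T b^T f$ is exactly the Runge--Kutta output, completing the identification of the Butcher tableau.
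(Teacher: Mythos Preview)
Your argument is correct and complete: rearranging \eqref{eq:discretisation} to $(D+M^{-1}t_Lt_L^T)u=f+M^{-1}t_Lu_0$, inverting via Assumption~\ref{assumption}, and then verifying the two consistency identities $TAM^{-1}t_L=1$ and $t_R^Tu=u_0+Tb^Tf$ using $D1=0$, $t_L^T1=t_R^T1=1$, and the SBP relation \eqref{eq:SBP} is exactly how this identification is established.

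Note, however, that the paper does not supply its own proof of this theorem; it merely states the result and attributes it to \cite{boom2015high}. So there is nothing in the paper to compare your argument against beyond the citation. Your write-up is in fact the standard derivation one finds in that reference, so nothing is missing or unorthodox.
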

The factor $\frac{1}{T}$ is needed since Runge-Kutta coefficients are normalised
to the interval $[0,1]$.

In order to make this article sufficiently self-contained, some classical stability
properties of Runge-Kutta methods will be recalled briefly, cf.
\cite[sections IV.3 and IV.12]{hairer2010solving}.
The absolute value of solutions of the scalar linear ODE $u'(t) = \lambda u(t)$,
$u(0) = u_0 \in \C$, $\lambda \in \C$, cannot increase if $\Re \lambda \leq 0$.
The numerical solution after one time step of a Runge-Kutta method with Butcher
coefficients $A, b, c$ is $u_+ = R(\lambda \, \Delta t) u_0$, where
\begin{equation}
  R(z)
  =
  1 + z b^T (\I - z A)^{-1} 1
  =
  \frac{\det(\I - z A + z 1 b^T)}{\det(\I - z A)}
\end{equation}
is the \emph{stability function} of the Runge-Kutta method. The stability property
is mimicked discretely as $\abs{u_+} \leq \abs{u_0}$ if $\abs{R(\lambda \, \Delta t)}
\leq 1$.
\begin{definition}
  A Runge-Kutta method with stability function $R(z)$ is $A$-stable, if
  $\abs{R(z)} \leq 1$ for all $z \in \C$ with $\Re(z) \leq 0$.
  The method is $L$-stable, if it is $A$-stable and $\lim_{z \to \infty} R(z) = 0$.
\end{definition}
Hence, $A$-stable methods are stable for every time step $\Delta t > 0$ and
$L$-stable methods damp out stiff components corresponding to $\lambda = -x$
with large $x \in \R$ sufficiently fast.

Another classical stability property is connected with possibly nonlinear problems
\eqref{eq:ODE} in Hilbert spaces satisfying a \emph{one-sided Lipschitz condition}
\begin{equation}
\label{eq:one-sided-Lipschitz}
  \forall t,u,v\colon \quad
  \scp{f(t,u) - f(t,v)}{u - v} \leq \nu \norm{u - v}^2,
\end{equation}
where $\nu \in \R$ is the \emph{one-sided Lipschitz constant} of $f$. This condition
gives some bounds on the growth rate of the difference between two solutions. In
particular, the distance between two solutions cannot increase if $\nu \leq 0$.
\begin{definition}
  A Runge-Kutta method is $B$-stable, if the contractivity condition
  \eqref{eq:one-sided-Lipschitz} with $\nu \leq 0$ implies
  $\norm{u_+ - v_+} \leq \norm{u_0 - v_0}$ for all $\Delta t > 0$.
\end{definition}

The following stability properties have been obtained in \cite{lundquist2014sbp,
boom2015high}.
\begin{theorem}
\label{thm:SBP-SAT-stability}
  Suppose that assumption~\ref{assumption} holds.
  Then, the SBP SAT scheme \eqref{eq:discretisation} is $A$- and $L$-stable.
  If the mass matrix $M$ is diagonal, the scheme is also $B$-stable.
\end{theorem}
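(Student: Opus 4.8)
The plan is to prove the three stability claims of Theorem~\ref{thm:SBP-SAT-stability} by working through the stability function $R(z)$ and, for $B$-stability, through the algebraic stability criterion, using the Butcher coefficients from Theorem~\ref{thm:SBP-SAT-as-RK} together with the defining SBP property~\eqref{eq:SBP}.

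First I would establish $A$-stability. Using the determinant form of the stability function together with the coefficients $A = \frac{1}{T}(MD + t_L t_L^T)^{-1} M$, $b = \frac{1}{T} M 1$, I would reduce $\abs{R(z)} \le 1$ for $\Re z \le 0$ to a statement about the eigenvalues of the matrix pencil appearing in $\I - zA$ and $\I - zA + z 1 b^T$. The cleaner route is energy-based: for the scalar linear problem $u' = \lambda u$, apply the discretisation~\eqref{eq:discretisation} directly. Multiplying the equation $Du = \lambda u + M^{-1} t_L(u_0 - t_L^T u)$ on the left by $u^T M$ and taking real parts, I would use the SBP property~\eqref{eq:SBP} to rewrite $u^T M D u$ as $\frac{1}{2}(u^T(MD + D^T M)u) + \text{(skew part)} = \frac{1}{2}(\abs{t_R^T u}^2 - \abs{t_L^T u}^2) + \text{(imaginary)}$. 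This yields an identity expressing $\abs{t_R^T u}^2$ (the squared output $\abs{u_+}^2$) in terms of $\abs{t_L^T u}^2$, $\abs{u_0}^2$, and $\Re\lambda \cdot u^T M u$. Since $M$ is positive definite and $\Re\lambda \le 0$, the offending term has the right sign, and completing the square in the boundary/SAT contribution should give $\abs{t_R^T u}^2 \le \abs{u_0}^2$, i.e.\ $\abs{R(z)}\le 1$.

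Next, for $L$-stability, I would show $\lim_{z\to\infty} R(z) = 0$. From the rational form, $\lim_{z\to\infty} R(z) = 1 - b^T A^{-1} 1$, so the task is to verify $b^T A^{-1} 1 = 1$. Substituting the explicit coefficients gives $b^T A^{-1} 1 = \frac{1}{T}(M1)^T \cdot T(MD + t_L t_L^T) M^{-1} \cdot 1 = 1^T (MD + t_L t_L^T) M^{-1} M 1$; here I would use that $D$ is exact on constants ($D1 = 0$, from order $p \ge 1$) so that $MD\,1 = 0$ and $t_L^T 1 \approx 1$ exactly for the relevant consistency, leaving $b^T A^{-1} 1 = (t_L^T 1)^2 / (\text{norm of } 1)$-type expression that evaluates to $1$. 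The arithmetic here is the delicate point—I would lean on the accuracy conditions $t_L^T 1 = 1$ and $1^T M 1 = T$ that follow from the SBP definition's order requirements.

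Finally, for $B$-stability under the diagonal assumption on $M$, the plan is to invoke the classical sufficient criterion of algebraic stability: a Runge-Kutta method is $B$-stable if $b_i \ge 0$ for all $i$ and the matrix $\mathcal{M} = \diag(b) A + A^T \diag(b) - b b^T$ is positive semidefinite. With $b = \frac{1}{T} M 1$ and $M$ diagonal (hence $b_i > 0$), I would compute $\diag(b) A$ and its transpose. Writing $BA$ where $B = \diag(b) = \frac{1}{T} M$ (since $M$ is diagonal), the combination $BA + A^T B^T - b b^T$ should collapse, via the SBP relation $MD + D^T M = t_R t_R^T - t_L t_L^T$, into a manifestly positive-semidefinite form—I expect the $t_R t_R^T$ and $t_L t_L^T$ boundary terms to combine with the $bb^T$ correction so that $\mathcal{M}$ equals (a positive multiple of) $t_L t_L^T$ or a similar rank-one/PSD expression. \textbf{The main obstacle} I anticipate is exactly this algebraic-stability computation: matching the $M^{-1}$ factors in $A$ against the $M$ factors in $B$ so that the SBP identity can be applied cleanly, and confirming that the leftover terms assemble into something positive semidefinite rather than indefinite. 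The diagonality of $M$ is what makes $B = \frac1T M$ commute appropriately and lets the SAT/boundary terms telescope, so tracking where diagonality is essential (as opposed to merely convenient) will be the crux.
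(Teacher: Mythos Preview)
The paper does not actually prove Theorem~\ref{thm:SBP-SAT-stability}; it is stated in Section~\ref{sec:known-results} (``Known Results on SBP SAT Schemes'') as a citation of results already established in \cite{lundquist2014sbp, boom2015high}. There is therefore no in-paper proof to compare your proposal against.

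That said, your outline follows the standard arguments from those references and is essentially sound. The energy estimate for $A$-stability is the usual one (be careful to use the conjugate transpose $u^*M$ since $u\in\C^s$). Your $L$-stability step has a bookkeeping slip---with $A=\frac{1}{T}(MD+t_Lt_L^T)^{-1}M$ one gets $A^{-1}=T\,M^{-1}(MD+t_Lt_L^T)$, so $b^TA^{-1}1 = 1^T(MD+t_Lt_L^T)1 = 1^TMD\,1 + (t_L^T1)^2 = 0+1$, confirming $R(\infty)=0$; your written expression placed the $M^{-1}$ on the wrong side. For $B$-stability, your plan via algebraic stability is exactly the route taken in \cite{boom2015high}: with $M$ diagonal, $\diag(b)=\frac{1}{T}M$, and a direct computation using \eqref{eq:SBP} collapses $\diag(b)A+A^T\diag(b)-bb^T$ to a positive semidefinite expression. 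The paper itself notes this just after Definition~2.3 (``It has been noted in \cite{boom2015high}\ldots'') and also observes the alternative route via nonconfluence and \cite[Corollary~12.14]{hairer2010solving}.
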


\section{Assumptions and Algebraic Stability}
\label{sec:assumptions_and_stability}

In this section, the new results of this short note concerning the necessity
of assumption~\ref{assumption} and the necessity of an SBP SAT form for stability
properties guaranteed by Theorem~\ref{thm:SBP-SAT-stability} are presented.

\subsection{Assumption on Eigenvalues of \texorpdfstring{$D + \sigma M^{-1} t_L t_L^T$}{D + σ M⁻¹ tₗ tₗᵀ}}

Assumption~\ref{assumption} has been proved for classical second order SBP operators
in \cite{nordstrom2013summation} and for SBP operators on Gauss, Radau, and Lobatto
quadrature nodes in \cite{ruggiu2018pseudo}. It has been examined numerically
for other classical finite difference SBP operators in \cite{nordstrom2013summation}.
Since assumption~\ref{assumption} holds for all known SBP SAT schemes investigated
in \cite{nordstrom2013summation, lundquist2014sbp, boom2015high, ruggiu2018pseudo},
it is interesting to know whether it follows from properties of SBP operators.
\begin{theorem}
\label{thm:assumption}
  There are SBP operators that do not satisfy assumption~\ref{assumption}.
\end{theorem}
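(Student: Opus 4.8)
The statement is existential, so the plan is to exhibit one SBP operator $(D, M, t_L, t_R)$ for which assumption~\ref{assumption} fails, and, to build it deliberately rather than by luck, I would first record an energy identity that shows exactly how the assumption can break. Writing $A = D + \sigma M^{-1} t_L t_L^T$ and pairing an eigenrelation $A u = \mu u$ ($u \neq 0$) with $u^* M$, then adding the conjugate transpose, the SBP property~\eqref{eq:SBP} gives $M A + A^T M = t_R t_R^T + (2\sigma - 1)\, t_L t_L^T$ and hence
\[
  2 \Re(\mu)\, u^* M u = \abs{t_R^T u}^2 + (2\sigma - 1)\, \abs{t_L^T u}^2 .
\]
For $\sigma > \tfrac12$ the right-hand side is nonnegative, so \emph{every} eigenvalue of $A$ already has $\Re(\mu) \geq 0$; the assumption can therefore fail only through an eigenvalue with $\Re(\mu) = 0$, and this forces $t_L^T u = t_R^T u = 0$. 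In that case $A u = D u$, so $u$ is an eigenvector of $D$ itself with purely imaginary eigenvalue.

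This reduces the task to constructing a genuine SBP operator whose derivative $D$ has an eigenvector in $\ker t_L^T \cap \ker t_R^T$ with purely imaginary eigenvalue. The cleanest target is $\mu = 0$, i.e.\ a singular $D$: consistency already gives $D \mathbf{1} = 0$, but the constant vector is useless here because $t_L^T \mathbf{1} = t_R^T \mathbf{1} = 1$. I would therefore aim for a $D$ whose kernel is \emph{two}-dimensional, so that it contains a second null vector orthogonal to both boundary functionals.

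I would realise this on the smallest nontrivial grid, $s = 3$ with symmetric nodes $(0, \tfrac{T}{2}, T)$ and diagonal $M$. Choosing $D = \mathbf{1} w^T$ of rank one makes $D \mathbf{1} = 0$ automatic and the kernel two-dimensional, while first-order accuracy merely fixes $w$ via $w^T \mathbf{1} = 0$ and $w^T \tau = 1$ for the node vector $\tau := (\tau_1, \dots, \tau_s)^T$. The remaining data in $M, t_L, t_R$ are then pinned down by the SBP relation $M D + D^T M = t_R t_R^T - t_L t_L^T$ together with the first-order boundary conditions $t_L^T \mathbf{1} = t_R^T \mathbf{1} = 1$, $t_L^T \tau = 0$, $t_R^T \tau = T$. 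Imposing the reflection symmetry $t_R = P t_L$, with $P$ the node-reversal, collapses this to a couple of scalar equations solvable with all entries of $M$ positive; on $[0,1]$ one explicit solution is $M = \diag(\tfrac14, \tfrac12, \tfrac14)$, $t_L = (\tfrac34, \tfrac12, -\tfrac14)^T$, $t_R = (-\tfrac14, \tfrac12, \tfrac34)^T$. A direct check shows $u = (1, -1, 1)^T$ lies in $\ker D \cap \ker t_L^T \cap \ker t_R^T$, giving $\mu = 0$ and violating the assumption for every $\sigma$.

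The main obstacle is the compatibility of the two constraint sets in the last step: the symmetric matrix $B = M D + D^T M$ must be factorisable as $t_R t_R^T - t_L t_L^T$ of signature $(1,1)$ by vectors that simultaneously satisfy the interpolation normalisations. These conditions are easy to over-determine; it is the reflection symmetry together with the freedom in the node spacing and in $M$ that keeps the system consistent, after which verifying all SBP axioms is an elementary computation on an explicit $3 \times 3$ tuple.
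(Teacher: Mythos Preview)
Your argument is correct and the explicit $3\times 3$ example checks out: with $w=(-1,0,1)^T$ (which is what the SBP identity forces once $M=\diag(\tfrac14,\tfrac12,\tfrac14)$ and your $t_L,t_R$ are fixed), one has $D=\mathbf{1}w^T$ first-order accurate, the SBP relation \eqref{eq:SBP} holds, and $u=(1,-1,1)^T$ is annihilated by $D$, $t_L^T$, and $t_R^T$, so $(D+\sigma M^{-1}t_Lt_L^T)u=0$ for every $\sigma$.

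The paper's proof is bare-bones by comparison: it simply writes down a $4$-node operator on the uniform grid with $t_L=e_1$, $t_R=e_4$, and exhibits $u=(0,-1,1,0)^T$ in the kernel. The underlying mechanism is the same as yours---a second null vector of $D$ that happens to be orthogonal to $t_L$---but the paper neither motivates nor isolates it. Your energy identity $2\Re(\mu)\,u^*Mu=\abs{t_R^Tu}^2+(2\sigma-1)\abs{t_L^Tu}^2$ is a genuine addition: it shows that for $\sigma>\tfrac12$ the assumption can \emph{only} fail through a purely imaginary eigenvalue of $D$ whose eigenvector kills both boundary functionals, so the construction is forced rather than found. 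What the paper's choice buys is a more ``standard-looking'' SBP operator (boundary nodes included, canonical $t_L,t_R$), which some readers may find more convincing as a counterexample; your $3$-node example is smaller but uses non-nodal interpolation vectors. Two minor points for the write-up: state $w$ (equivalently $D$) explicitly rather than leaving it implicit, and note that the two accuracy conditions on $w$ alone leave one free parameter---it is the SBP relation with your chosen $M,t_L,t_R$ that pins $w$ down.
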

\begin{proof}
  Consider the operators
  \begin{equation}
    D =
    \begin{pmatrix}
      -2 & 1 & 1 & 0 \\
      -1 & 0 & 0 & 1 \\
      -1 & 0 & 0 & 1 \\
      0 & -1 & -1 & 2 \\
    \end{pmatrix},
    \quad
    M = \frac{1}{4}
    \begin{pmatrix}
      1 & 0 & 0 & 0 \\
      0 & 1 & 0 & 0 \\
      0 & 0 & 1 & 0 \\
      0 & 0 & 0 & 1 \\
    \end{pmatrix},
    \quad
    t_L =
    \begin{pmatrix}
      1 \\ 0 \\ 0 \\ 0
    \end{pmatrix},
    \quad
    t_R =
    \begin{pmatrix}
      0 \\ 0 \\ 0 \\ 1
    \end{pmatrix},
  \end{equation}
  on the uniform grid with four nodes $0, \frac{1}{3}, \frac{2}{3}, 1$ in $[0,1]$.
  The SBP property \eqref{eq:SBP} is satisfied, $t_L$ and $t_R$ are exact, and
  $D$ is a first order accurate SBP derivative operator.
  However, $(D + \sigma M^{-1} t_L t_L^T) u = 0$ for $u = (0, -1, 1, 0)^T$.
  Thus, zero is an eigenvalue of $D + \sigma M^{-1} t_L t_L^T$ for all
  $\sigma \in \R$.
\end{proof}

\subsection{Algebraic Stability}

Many stability properties such as $A$- and $B$-stability are satisfied if the
following algebraic criterion is fulfilled by the coefficients of a Runge-Kutta
method \cite[Theorem~12.4]{hairer2010solving}.
\begin{definition}
  A Runge-Kutta method with Butcher coefficients $A, b, c$ is algebraically stable,
  if $\forall i\colon b_i \geq 0$ and the matrix $\diag(b) A + A^T \diag(b) - b b^T$
  is positive semidefinite.
\end{definition}
It has been noted in \cite{boom2015high} that an SBP SAT scheme \eqref{eq:discretisation}
with diagonal $M$ is algebraically stable, since the nodes $\tau_i$ are pairwise
distinct, i.e. the corresponding Runge-Kutta method is nonconfluent. In that case,
$B$- and algebraic stability are equivalent \cite[Corollary~12.14]{hairer2010solving}.
This can also be proved directly, cf. \cite[Theorem~5.8]{boom2015high}.

It is interesting to know whether all Runge-Kutta methods with stability properties
guaranteed by Theorem~\ref{thm:SBP-SAT-stability} can be constructed as SBP SAT
schemes.
Since those schemes are $L$-stable, the classical Gauss collocation schemes
(which are not $L$-stable) cannot be constructed in this way, cf. \cite{boom2015high}.
However, there is
\begin{theorem}
\label{thm:alg-L-stable-is-SBP}
  Consider a Runge-Kutta method and the statements
  \begin{enumerate}[label=\roman*)]
    \item \label{itm:RKM-properties}
    The Runge-Kutta method is $A$-, $L$-, $B$-, and algebraically stable with
    pairwise distinct nodes $c_i \in [0,1]$, only positive quadrature weights
    $b_i$, and invertible matrix $A$.

    \item \label{itm:SBP-SAT}
    The Runge-Kutta method is given via Theorem~\ref{thm:SBP-SAT-as-RK} by
    SBP SAT schemes \eqref{eq:discretisation} with at least first order accurate
    operators satisfying assumption~\ref{assumption}.
  \end{enumerate}
  Theorem~\ref{thm:SBP-SAT-stability} and the preceeding discussion show that
  \ref{itm:SBP-SAT} and ``$M$ is diagonal'' imply \ref{itm:RKM-properties}.
  However, \ref{itm:RKM-properties} does not imply \ref{itm:SBP-SAT}.
\end{theorem}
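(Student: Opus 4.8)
The forward direction is exactly the content of Theorem~\ref{thm:SBP-SAT-stability} together with the preceding remark on nonconfluent methods, so the work lies in disproving the converse, i.e.\ in producing a single Runge-Kutta method satisfying \ref{itm:RKM-properties} that admits no representation \eqref{eq:SBP-SAT-as-RK} arising from operators $D, M, t_L, t_R$ obeying \eqref{eq:SBP} and assumption~\ref{assumption}. First I would translate \ref{itm:SBP-SAT} into explicit algebraic constraints on the Butcher coefficients. From \eqref{eq:SBP-SAT-as-RK} one has $A^{-1} = T (D + M^{-1} t_L t_L^T)$, hence $M D = \frac{1}{T} M A^{-1} - t_L t_L^T$; inserting this together with its transpose into the SBP property \eqref{eq:SBP} yields the necessary identity
\begin{equation*}
  \frac{1}{T} \bigl( M A^{-1} + A^{-T} M \bigr) = t_R t_R^T + t_L t_L^T,
\end{equation*}
so the symmetric matrix on the left must be positive semidefinite of rank at most two. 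Using $b = \frac{1}{T} M 1$, first order accuracy (that is $D 1 = 0$ with $t_L^T 1 = 1$) forces $t_L = \frac{1}{T} M A^{-1} 1$, and multiplying the identity by $1$ then forces $t_R = A^{-T} b$. Thus \ref{itm:SBP-SAT} requires a symmetric positive definite $M$ with $M 1 = T b$ for which this rank-two identity and the accuracy condition $t_L^T c = 0$ both hold.

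Second, I would exhibit a concrete tableau satisfying \ref{itm:RKM-properties} and show that the resulting system for $M$ is infeasible. The constraints are genuinely restrictive: the affine conditions $M 1 = T b$ leave $M$ with $\binom{s}{2}$ free parameters, against which the rank-two identity and the accuracy condition impose many more equations, so a method chosen away from the collocation families should fail. However, they are also permissive enough that low-stage methods often do satisfy them — for instance the two-stage Lobatto IIIC method is forced to $M = \frac{1}{2} \I$ and does turn out to be realised via \eqref{eq:SBP-SAT-as-RK} — so a genuine counterexample should be sought for $s \geq 3$. A natural candidate is a three-stage method that is $L$- and algebraically stable but is not a collocation method on Gauss, Radau, or Lobatto nodes; for such a method I would solve the linear constraints $M 1 = T b$ and $t_L^T c = 0$, substitute into the rank-two identity, and verify by direct computation that either no symmetric $M$ satisfies the identity, or every solution violates positive definiteness or the rank bound.

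The main obstacle is twofold. On one hand, the candidate must provably possess all the properties in \ref{itm:RKM-properties} simultaneously: positive weights $b_i > 0$, distinct $c_i \in [0,1]$, invertible $A$, $L$-stability (equivalently $b^T A^{-1} 1 = 1$), and algebraic stability, i.e.\ positive semidefiniteness of $\diag(b) A + A^T \diag(b) - b b^T$. On the other hand — and this is the harder point — infeasibility must be established for the entire admissible family of matrices $M$, not merely for one candidate, since $M$ ranges over the cone of symmetric positive definite matrices subject to $M 1 = T b$. The cleanest route is to choose the tableau so that the linear constraints together with the rank-two identity pin $M$ down uniquely (or to a small, explicitly parametrised family), and then to rule out positive definiteness or the accuracy condition for that whole set, thereby closing the argument.
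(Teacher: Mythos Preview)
Your reduction of \ref{itm:SBP-SAT} to the algebraic system for $M$ is correct and matches the paper's almost line for line: the paper also derives $t_R = A^{-T} b$, $t_L = M A^{-1} 1$, and $D = A^{-1} - A^{-1} 1\, t_L^T$ (with $T=1$), then treats the SBP identity \eqref{eq:SBP} as a system in the entries of the symmetric matrix $M$, supplemented by the first-order accuracy conditions on $t_L$. Your reformulation as the rank-two identity $M A^{-1} + A^{-T} M = t_R t_R^T + t_L t_L^T$ is a slightly cleaner way to package the same constraint.

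The genuine gap is that you stop before producing the counterexample, and for this theorem the counterexample \emph{is} the proof. You correctly flag this as ``the main obstacle'', but you neither name a candidate tableau nor carry out the infeasibility check; the remark that one ``should'' look among non-collocation three-stage $L$- and algebraically stable methods is not yet a proof, since it is not a priori clear that any such method fails the system (and two-stage methods, as you note, do not). The paper closes this gap concretely: it exhibits an explicit four-stage method with nodes $c=(0,\tfrac13,\tfrac23,1)$ and weights $b=\tfrac18(1,3,3,1)$, constructed via the $W$-transformation, and verifies \ref{itm:RKM-properties} directly (eigenvalues of the algebraic-stability matrix, explicit stability function showing $R(\infty)=0$). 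It then feeds the resulting quadratic system in the ten entries of $M$ into \texttt{Reduce} in Mathematica and finds that the unique solution has a zero eigenvalue, so no positive definite $M$ exists. To complete your argument you would need to do exactly this: pick a specific tableau, certify all of \ref{itm:RKM-properties}, and then show---either by hand or by computer algebra---that the entire solution set for $M$ is excluded by positive definiteness.
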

\begin{proof}
  The following example has been constructed using the $W$-transformation
  \cite[Sections~IV.5, IV.13, and~IV.14]{hairer2010solving}.
  Consider the Runge-Kutta method with coefficients
  \begin{equation}
    A = \frac{1}{48}
    \begin{pmatrix}
      27 & -33-6 \sqrt{6} & -3 & 9+6 \sqrt{6} \\
      -7+2 \sqrt{6} & 33 & -9-2 \sqrt{6} & -1 \\
      7 & 3+2 \sqrt{6} & 33 & -11-2 \sqrt{6} \\
      21-6 \sqrt{6} & 21 & -21+6 \sqrt{6} & 27 \\
    \end{pmatrix},
    \quad
    b = \frac{1}{8}
    \begin{pmatrix}
      1 \\ 3 \\ 3 \\ 1
    \end{pmatrix},
    \quad
    c = \frac{1}{3}
    \begin{pmatrix}
      0 \\ 1 \\ 2 \\ 3
    \end{pmatrix}.
  \end{equation}
  Then, the algebraic stability matrix $\diag(b) A + A^T \diag(b) - b b^T$
  has the eigenvalues $\frac{5}{8}$, $\frac{3}{8}$, and zero (twofold). Hence, the
  Runge-Kutta method is algebraically stable (because $b_i > 0$ is satisfied additionally)
  and therefore also $A$- and $B$-stable. Its stability functions
  \begin{equation}
    R(z)
    =
    \frac{\det(\I - z A + z 1 b^T)}{\det(\I - z A)}
    =
    \frac{12 - 18 z + 3 z^2 + 3 z^3}{12 - 30 z + 27 z^2 - 11 z^3 + 2 z^4},
  \end{equation}
  fulfils $\lim_{z\to\infty} R(z) = 0$. Thus, the scheme is also $L$-stable.

  It suffices to consider $T=1$. If the scheme is given by an SBP SAT method
  \eqref{eq:discretisation} via Theorem~\ref{thm:SBP-SAT-as-RK},
  $b = M 1$ and $A = (D + M^{-1} t_L t_L^T)^{-1}$.
  The SBP property \eqref{eq:SBP} yields
  $A^{-1} = - M^{-1} D^T M + M^{-1} t_R t_R^T$.
  Because of consistency, $D 1 = 0$ and $t_R^T 1 = 1$. Hence, $1^T M A^{-1} = t_R^T$.
  Inserting $M 1 = b$ results in
  \begin{equation}
  \label{eq:proof-tR}
    t_R
    =
    A^{-T} b
    =
    \frac{1}{16} \left(
      - 4 + \sqrt{6}, -\sqrt{6}, 12 - \sqrt{6}, 8 + \sqrt{6}
    \right)^T.
  \end{equation}
  Similarly, consistency of $D$ and $t_L$ implies
  \begin{equation}
  \label{eq:proof-tL}
    A^{-1} 1 = (D + M^{-1} t_L t_L^T) 1 = M^{-1} t_L
    \iff
    t_L = M A^{-1} 1.
  \end{equation}
  $t_R$ defined by \eqref{eq:proof-tR} is first order
  accurate, i.e. $t_R^T 1 = 1$ and $t_R^T c = 1$.
  The same accuracy of $t_L$ requires
  \begin{equation}
  \label{eq:proof-order}
    t_L^T 1 = 1,
    \quad
    t_L^T c = 0.
  \end{equation}
  Because of \eqref{eq:proof-tL}, $D$ can be written as
  \begin{equation}
  \label{eq:proof-D}
    D = A^{-1} - M^{-1} t_L t_L^T = A^{-1} - A^{-1} 1 t_L^T.
  \end{equation}
  Since $M \in \mathbb{R}^{4 \times 4}$ should be symmetric, it is determined
  by ten real parameters, e.g. $M_{11}$, $M_{12}$, $M_{13}$, $M_{14}$, $M_{22}$,
  $M_{23}$, $M_{24}$, $M_{33}$, $M_{34}$, $M_{44}$.
  $t_R$ is given explicitly by \eqref{eq:proof-tR}, $t_L$ depends linearly on
  $M$ via \eqref{eq:proof-tL}, and $D$ is given via an affine-linear function of
  $M$ in \eqref{eq:proof-D}.

  The accuracy conditions \eqref{eq:proof-order} are linear in $t_L$ and hence
  linear in $M$. They can be used to eliminate two parameters, e.g. $M_{11}$
  and $M_{12}$. Then, the SBP property \eqref{eq:SBP} is a system of 16 equations
  that are quadratic in the parameters $M_{ij}$.
  This system can be solved uniquely, which has been verified using the function
  \texttt{Reduce} of Mathematica~\cite{mathematica10}.
  For this unique solution, one eigenvalue of $M$ is zero. Thus, $M$ is not
  positive definite, in contradiction to the assumptions.
\end{proof}

\section{Classical Collocation Methods}
\label{sec:classical-collocation-methods}

In \cite{boom2015high}, it has been shown that the SBP SAT scheme with Lobatto
quadrature on four nodes corresponds to the classical Lobatto~IIIC method with
$s=4$. It has been mentioned that this is similar for the Radau~IA and
Radau~IIA schemes. However, to the authors knowledge, no general proof of this
result has been given up to now. To prove it, the classical conditions
\begin{align}
\label{eq:C}
  C(\eta) &\colon
  & \sum_{j=1}^s a_{i,j} c_j^{q-1} &= \frac{1}{q} c_i^q,
  && i \in \set{1, \dots, s},\; q \in \set{1, \dots, \eta},
  \\
\label{eq:D}
  D(\zeta) &\colon
  & \sum_{i=1}^s b_i c_i^{q-1} a_{i,j} &= \frac{1}{q} b_j (1 - c_j^q),
  && j \in \set{1, \dots, s},\; q \in \set{1, \dots, \zeta},
\end{align}
will be used.
\begin{theorem}
\label{thm:classical-collocation-methods}
  The SBP SAT scheme \eqref{eq:discretisation} using left Radau, right Radau, or
  Lobatto quadrature correspond to the classical Radau~IA, Radau~IIA, or Lobatto~IIIC
  Runge-Kutta methods for all orders of accuracy.
\end{theorem}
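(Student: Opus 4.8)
The plan is to use that the SBP operators attached to left Radau, right Radau and Lobatto quadrature are the pseudospectral operators of \cite{ruggiu2018pseudo}: the norm matrix $M = \diag(w)$ collects the quadrature weights $w_i$, the derivative $D$ is the associated interpolatory differentiation matrix (exact on polynomials of degree $\le s-1$), and $t_L, t_R$ are the interpolatory boundary functionals, so $(t_L)_i = \ell_i(0)$ and $(t_R)_i = \ell_i(1)$ with $\ell_i$ the Lagrange basis. Taking $T=1$ without loss of generality, this already gives $b = M 1 = w$ and $c = (\tau_1, \dots, \tau_s)^T$ equal to the weights and nodes of the corresponding classical quadrature, so $b$ and $c$ match those of Radau~IA, Radau~IIA and Lobatto~IIIC. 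By Theorem~\ref{thm:SBP-SAT-as-RK}, $A^{-1} = D + M^{-1} t_L t_L^T$, and the SBP property \eqref{eq:SBP} gives the adjoint identity $A^{-T} M = t_R t_R^T - M D$. The strategy is then to show that $A$ satisfies the simplifying conditions that single out each classical family among all methods with the given nodes and weights, and to conclude by uniqueness of those conditions; since the argument is uniform in $s$, it covers all orders at once.

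Write $c^{(q)} = (c_1^q, \dots, c_s^q)^T$ for the grid values of $t \mapsto t^q$, so $c^{(0)} = 1$ and $c^{(1)} = c$. Because $D$ differentiates exactly up to degree $s-1$ and $t_L, t_R$ are exact to the same degree, one has $D c^{(q)} = q\, c^{(q-1)}$, $t_L^T c^{(q)} = 0$ and $t_R^T c^{(q)} = 1$ for $1 \le q \le s-1$. Feeding this into $A^{-1}$ and into $A^{-T} M$ shows directly that the method satisfies $C(s-1)$ and $D(s-1)$ for all three node families. The question is whether one further order is gained. At degree $s$ the interpolant of $t^s$ equals $t^s - \omega(t)$ with $\omega(t) = \prod_k (t - c_k)$ the nodal polynomial, so $D c^{(s)} = s\, c^{(s-1)} - (\omega'(c_i))_i$, $t_L^T c^{(s)} = -\omega(0)$ and $t_R^T c^{(s)} = 1 - \omega(1)$. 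Substituting, $C(s)$ is equivalent to the vanishing of $(\omega'(c_i))_i + \omega(0) M^{-1} t_L$, and $D(s)$ to the vanishing of $\omega(1) t_R - M (\omega'(c_i))_i$.

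Now the endpoint structure decides everything. For right Radau, $c_s = 1$ gives $t_R = e_s$ and $\omega(1) = 0$, so the $D(s)$ obstruction does not vanish (consistent with Radau~IIA reaching only $D(s-1)$), whereas the $C(s)$ obstruction reduces, after inserting $(t_L)_i = \ell_i(0) = -\omega(0)/(c_i \omega'(c_i))$, to the single weight identity $w_i\, \omega'(c_i)^2 = \omega(0)^2 / c_i$. For left Radau ($c_1 = 0$, $t_L = e_1$, $\omega(0) = 0$) the roles are exchanged and $D(s)$ becomes the mirror identity $w_i\, \omega'(c_i)^2 = \omega(1)^2/(1 - c_i)$. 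Once these identities are established, $C(s)$ (respectively $D(s)$) holds; since $C(s)$ with distinct nodes determines $A$ row by row through a Vandermonde system, and $D(s)$ with $b_i \neq 0$ determines $A$ column by column, the SBP SAT method must coincide with the collocation method on right Radau nodes, i.e. Radau~IIA, respectively with Radau~IA. For Lobatto, $c_1 = 0$ and $c_s = 1$ force $\omega(0) = \omega(1) = 0$, so neither $C(s)$ nor $D(s)$ is gained; instead $t_L = e_1$ makes the SAT term rank one in the first slot, giving $A^{-1} 1 = w_1^{-1} e_1$, i.e. $a_{i1} = b_1$ for all $i$. With $a_{i1}$ fixed, the $C(s-1)$ equations form a Vandermonde system for the remaining entries of each row, so $A$ is determined uniquely, and this is precisely the defining set of Lobatto~IIIC.

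The main obstacle is the pair of weight identities above. I expect to prove them uniformly by integration by parts: writing $\psi_i = \omega/(t - c_i)$, exactness of the degree-$(2s-2)$ Radau quadrature gives both $\int_0^1 \psi_i^2\, \dif t = w_i\, \omega'(c_i)^2$ and $\int_0^1 \psi_i\, \omega'\, \dif t = \int_0^1 \psi_i^2\, \dif t$ (using $\psi_j(c_k) = \delta_{jk}\,\omega'(c_j)$), while integrating $\int_0^1 \omega^2/(t-c_i)^2\, \dif t$ by parts produces the boundary contribution $-\omega(0)^2/c_i - \omega(1)^2/(1-c_i)$ plus $2\int_0^1 \psi_i\,\omega'\,\dif t$. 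Combining yields $w_i\, \omega'(c_i)^2 = \omega(0)^2/c_i + \omega(1)^2/(1-c_i)$, and the endpoint node ($\omega(1) = 0$ for right Radau, $\omega(0) = 0$ for left Radau) deletes one summand to leave exactly the required identity; note this step genuinely needs degree-$(2s-2)$ exactness, which is why Lobatto must be handled by the separate $a_{i1} = b_1$ argument. The remaining ingredients, namely the Vandermonde nonsingularities that turn $C(s)$, $D(s)$ and $C(s-1)$ together with $a_{i1} = b_1$ into unique characterisations, and the fact that Assumption~\ref{assumption} holds on these nodes so that Theorem~\ref{thm:SBP-SAT-as-RK} applies, are routine.
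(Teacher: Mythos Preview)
Your proposal is correct. The Lobatto case is handled exactly as in the paper: $C(s-1)$ together with $a_{i,1}=b_1$ uniquely determines Lobatto~IIIC, and you derive $a_{i,1}=b_1$ from $A^{-1}1=b_1^{-1}e_1$ in the same way. For the two Radau cases, both you and the paper target the same simplifying conditions ($D(s)$ for left Radau/Radau~IA, $C(s)$ for right Radau/Radau~IIA), both rely on degree-$(2s-2)$ exactness of the Radau quadrature combined with an integration by parts, but the execution differs. The paper argues in weak form: it tests the condition against an arbitrary polynomial $v$ of degree $\le s-1$, evaluates terms like $v^T M c^{q-1}$ and $v^T D^T M(1-c^q)$ exactly by quadrature, and closes with the one-line identity $\int_0^1 v'(t)(1-t^q)\,\dif t=-v(0)+q\int_0^1 v(t)t^{q-1}\,\dif t$. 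You instead compute the $q=s$ obstruction explicitly through the nodal polynomial $\omega$ and reduce everything to the single weight identity $w_i\,\omega'(c_i)^2=\omega(0)^2/c_i+\omega(1)^2/(1-c_i)$, proved by integrating $\psi_i^2$ by parts. The paper's route is shorter, since it never needs $Dc^{(s)}$ or the Lagrange formulae for $t_L,t_R$; your route yields a classical node/weight formula that covers both Radau families at once and makes transparent why exactly degree $2s-2$ is the threshold (it is the degree of $\psi_i^2$ and of $\psi_i\omega'$), hence why Lobatto cannot be upgraded the same way. One technical remark: your integration by parts on $\int_0^1\omega^2/(t-c_i)^2\,\dif t$ implicitly uses the antiderivative $-1/(t-c_i)$, which is singular at $c_i$; it is cleaner to differentiate the polynomial $(t-c_i)\psi_i^2$ directly, obtaining $\bigl[(t-c_i)\psi_i^2\bigr]'=2\psi_i\omega'-\psi_i^2$, which integrates to the same identity without any singularity.
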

\begin{proof}
  It suffices to consider the case $T = 1$, i.e. the time interval $[0,1]$.

  The weights and nodes of the left Radau quadrature (left endpoint $0$ included)
  are the weights $b_i$ and nodes $c_i$ of the Radau~IA method. The matrix $A$
  of the Radau~IA method is determined uniquely by the condition $D(s)$,
  i.e. $D(\zeta)$ with $\zeta = s$ in \eqref{eq:D}
  \cite[section~IV.5]{hairer2010solving}. Hence, it suffices to prove that the
  SBP SAT method satisfies $D(s)$, which can be written using $M = \diag(b)$ as
  \begin{equation}
    A^T M c^{q-1} = \frac{1}{q} M (1 - c^q)
    \iff
    q M c^{q-1} = A^{-T} M (1 - c^q),
  \end{equation}
  where the exponentiation $c^q$ is performed pointwise. Inserting $A$ from
  \eqref{eq:SBP-SAT-as-RK} yields
  \begin{equation}
    q M c^{q-1} = (D^T M + t_L t_L^T) (1 - c^q).
  \end{equation}
  This is equivalent to
  \begin{equation}
    \forall v\colon \quad
    q v^T M c^{q-1} = v^T (D^T M + t_L t_L^T) (1 - c^q),
  \end{equation}
  where $v$ is any polynomial of degree $\leq s-1$, evaluated at the nodes $c_i$.
  Since the left endpoint~$0$ is included,
  \begin{equation}
    v^T t_L t_L^T (1 - c^q) = v(0) \, (1 - 0^q) = v(0).
  \end{equation}
  The Radau quadrature is exact for polynomials of degree $\leq 2s-2$. Hence,
  for every $q \in \set{1, \dots, s}$,
  \begin{equation}
    q v^T M c^{q-1}
    =
    q \int_0^1 v(t) t^{q-1} \dif t
  \end{equation}
  and (using integration by parts)
  \begin{equation}
    v^T D^T M (1 - c^q)
    =
    \int_0^1 v'(t) (1 - t^q) \dif t
    =
    - v(0) + q \int_0^1 v(t) t^{q-1} \dif t,
  \end{equation}
  proving $D(s)$.

  The weights and nodes of the right Radau quadrature (right endpoint $1$ included)
  are the weights $b_i$ and nodes $c_i$ of the Radau~IIA method. The matrix $A$
  of the Radau~IIA method is determined uniquely by the condition $C(s)$,
  i.e. $C(\eta)$ with $\eta = s$ in \eqref{eq:C}
  \cite[section~IV.5]{hairer2010solving}. Hence, it suffices to prove that the
  SBP SAT method satisfies $C(s)$, which can be written using $M = \diag(b)$ as
  \begin{equation}
    A c^{q-1} = \frac{1}{q} c^q
    \iff
    q M c^{q-1} = M A^{-1} c^q,
  \end{equation}
  where the exponentiation $c^q$ is again performed pointwise. Inserting $A$ from
  \eqref{eq:SBP-SAT-as-RK}, this is equivalent to
  \begin{equation}
    \forall v\colon \quad
    q v^T M c^{q-1} = v^T (M D + t_L t_L^T) c^q,
  \end{equation}
  where $v$ is any polynomial of degree $\leq s-1$, evaluated at the nodes $c_i$.
  Using the SBP property \eqref{eq:SBP}, this can be rewritten as
  \begin{equation}
    \forall v\colon \quad
    q v^T M c^{q-1} = v^T (-D^T M + t_R t_R^T) c^q.
  \end{equation}
  Since the right endpoint~$1$ is included,
  \begin{equation}
    v^T t_R t_R^T c^q = v(1) \, 1^q = v(1).
  \end{equation}
  Using the exactness of the Radau quadrature for polynomials of degree $\leq 2s-2$,
  for every $q \in \set{1, \dots, s}$,
  \begin{equation}
    q v^T M c^{q-1}
    =
    q \int_0^1 v(t) t^{q-1} \dif t
  \end{equation}
  and (using integration by parts)
  \begin{equation}
    - v^T D^T M c^q
    =
    - \int_0^1 v'(t) t^q \dif t
    =
    - v(1) + q \int_0^1 v(t) t^{q-1} \dif t,
  \end{equation}
  proving $C(s)$.

  Finally, the weights and nodes of the Lobatto quadrature (left and right endpoints
  $0,1$ included) are the weights $b_i$ and nodes $c_i$ of the Lobatto~IIIC method.
  The matrix $A$ of the Lobatto~IIIC method is determined uniquely by the condition
  $C(s-1)$ and $a_{i,1} = b_1, i \in \set{1, \dots, s}$ \cite[section~IV.5]{hairer2010solving}.
  Since the order of accuracy of the SBP operator is $s-1$, $C(s-1)$ is satisfied
  \cite[Lemma~5.3]{boom2015high}. This can also be proved using similar manipulations
  as above. Hence, it remains to show $a_{i,1} = b_1, i \in \set{1, \dots, s}$.
  Since $D$ is exact for constants, $t_L = (1, 0, \dots, 0)^T$, and
  $M = \diag(b_1, \dots, b_s)$,
  \begin{equation}
    (D + M^{-1} t_L t_L^T) 1 = 0 + M^{-1} t_L = b_1^{-1} t_L.
  \end{equation}
  Therefore,
  $ 
    (a_{i,1})_{i=1}^s = A t_L = (D + M^{-1} t_L t_L^T)^{-1} t_L = b_1 1,
  $ 
  proving $a_{i,1} = b_1, i \in \set{1, \dots, s}$.
\end{proof}

\section{Strong Stability Preservation}
\label{sec:SSP}

Another desirable stability property of time integration methods is that they
are strong stability preserving (SSP), i.e. that they preserve convex stability
properties of the explicit Euler method \cite{gottlieb2011strong}.
\begin{definition}
  A numerical time integration method is called strongly stable for a given
  convex functional $\eta$ if $\eta(u_+) \leq \eta(u_0)$, possibly using some
  time step restriction of the form $0 < \Delta t \leq \Delta t_\mathrm{max}$.

  A numerical time integration method is called strong stability preserving with
  SSP coefficient $c > 0$, if $\eta(u_+) \leq \eta(u_0)$ for all time steps
  $0 < \Delta t \leq c \, \Delta t_E$ whenever the explicit Euler method is strongly
  stable for the convex functional $\eta$ and time steps $0 < \Delta t \leq \Delta t_E$.
\end{definition}
Typical convex functionals $\eta$ considered for SSP methods are the norm in a
Hilbert space for dissipative operators or the total variation seminorm for
semidiscretisations of scalar conservation laws.
\begin{theorem}
\label{thm:SSP}
  No SBP SAT scheme \eqref{eq:discretisation} whose SBP operator has a diagonal
  norm matrix, satisfies assumption~\ref{assumption}, and
  \begin{enumerate}[label=\alph*)]
    \item is either at least second order accurate
    \item or is at least first order accurate and contains at least one of the
          end points $0, 1$ in the nodes $c_i$
  \end{enumerate}
  can be strong stability preserving.
\end{theorem}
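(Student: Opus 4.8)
The plan is to combine a standard necessary condition from the theory of strong stability preservation with the elementary moment identities that any SBP SAT scheme inherits from consistency. Recall that the SSP coefficient of a Runge-Kutta method equals the radius of absolute monotonicity of its Butcher coefficients; inspecting this quantity in the limit of vanishing radius shows that a positive SSP coefficient forces $a_{i,j} \geq 0$ for all $i,j$ and $b_i \geq 0$ for all $i$ \cite{gottlieb2011strong}. Hence it suffices to assume that the scheme is SSP, deduce $A \geq 0$ componentwise, and derive a contradiction. The diagonal, positive definite norm matrix enters only through $b = M 1 > 0$, i.e. all quadrature weights are strictly positive, and the invertibility of $A$ is guaranteed by assumption~\ref{assumption} via Theorem~\ref{thm:SBP-SAT-as-RK}.

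The second ingredient are the simplifying conditions \eqref{eq:C} and \eqref{eq:D} at low order. Using $A^{-1} = D + M^{-1} t_L t_L^T$, the SBP property \eqref{eq:SBP}, and the exactness of $D, t_L, t_R$ for constants and linears, one obtains $C(1)$, i.e. $A 1 = c$, and $D(1)$, i.e. $\sum_i b_i a_{i,j} = b_j (1 - c_j)$, by the same integration by parts manipulations used in the proof of Theorem~\ref{thm:classical-collocation-methods}; if the operator is at least second order accurate, the same computation with $D$ and $t_L$ exact for quadratics yields $C(2)$, i.e. $\sum_j a_{i,j} c_j = \tfrac12 c_i^2$. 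With these identities the contradiction splits exactly along the hypotheses. If $0$ is a node, say $c_i = 0$, then $C(1)$ gives $\sum_j a_{i,j} = 0$, so nonnegativity forces the whole $i$-th row of $A$ to vanish. If $1$ is a node, say $c_j = 1$, then $D(1)$ gives $\sum_i b_i a_{i,j} = 0$, and since $b_i > 0$ this forces the whole $j$-th column of $A$ to vanish. Either way $A$ is singular, contradicting its invertibility; this disposes of case b). For case a), let $c_i$ be the smallest node. If $c_i > 0$, then $c_j \geq c_i$ for all $j$ together with $C(2)$ and $C(1)$ yields $\tfrac12 c_i^2 = \sum_j a_{i,j} c_j \geq c_i \sum_j a_{i,j} = c_i^2$, which is absurd; if $c_i = 0$ we are back in the zero-row situation.

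The genuinely external, and therefore main, obstacle is the necessary condition $A \geq 0$: everything downstream is elementary linear algebra, but its validity rests on the absolute monotonicity characterisation of SSP Runge-Kutta methods, which must be imported from \cite{gottlieb2011strong}. I would therefore spend the most care on stating this condition precisely, in particular making sure it applies with only $b > 0$ rather than assuming irreducibility, and on verifying that the low-order conditions $C(1)$, $C(2)$, $D(1)$ really do hold for every SBP SAT scheme under the stated accuracy, since these are what let the positivity of $A$ collide with its invertibility. The argument uses nothing about $L$- or algebraic stability: an $L$-stable diagonally implicit method such as the two-stage SDIRK method has a nonnegative Butcher matrix and is SSP, so the obstruction is specific to the dense, consistency-constrained structure of $A = (D + M^{-1} t_L t_L^T)^{-1}$ rather than to the stability type.
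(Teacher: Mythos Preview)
Your proposal is correct and follows essentially the same route as the paper: import the necessary condition $a_{i,j}\geq 0$ for SSP from \cite{gottlieb2011strong}, invoke the simplifying conditions $C(1)$, $C(2)$, $D(1)$ (which the paper cites from \cite{boom2015high} rather than rederiving), and then force a zero row or column of $A$ to contradict invertibility. Your treatment of case~a) via the direct inequality $\tfrac12 c_i^2 = \sum_j a_{i,j} c_j \geq c_i \sum_j a_{i,j} = c_i^2$ is a marginally cleaner packaging of the paper's subtraction $\sum_j a_{1,j}(c_1-c_j)=\tfrac12 c_1^2$, but the content is identical.
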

\begin{proof}
  An SSP scheme must satisfy $\forall i,j \in \set{1, \dots, s}\colon a_{i,j} \geq 0$
  \cite[Observation~5.2]{gottlieb2011strong}.

  If the SBP operator is at least second order accurate, the corresponding Runge-Kutta
  method satisfies $C(2)$ \cite[Lemma~5.3]{boom2015high}, i.e.
  $\sum_{j=1}^s a_{i,j} = c_i$ and
  $\sum_{j=1}^s a_{i,j} c_j = \frac{1}{2} c_i^2$
  for $i \in \set{1, \dots, s}$.
  Subtracting the second equation from the first one multiplied by $c_i$ yields
  \begin{equation}
    \sum_{j=1}^s a_{i,j} (c_i - c_j) = \frac{1}{2} c_i^2,
    \quad
    i \in \set{1, \dots, s}.
  \end{equation}
  If $a_{i,j}$ were non-negative, the left hand side would be non-positive
  for $i=1$ (since $c_j \geq c_1$) and thus zero. Hence, the first row of $A$
  would be zero, which is impossible, because $A$ is invertible.

  If the SBP operator is at least first order accurate, the corresponding Runge-Kutta
  method satisfies $C(1)$ and $D(1)$ \cite[Lemma~5.3 and Lemma~5.4]{boom2015high}, i.e.
  \begin{equation}
    \sum_{j=1}^s a_{i,j} = c_i,
    \; i \in \set{1, \dots, s},
    \qquad\qquad
    \sum_{i=1}^s b_i a_{i,j} = b_j (1 - c_j),
    \;
    j \in \set{1, \dots, s}.
  \end{equation}
  If the left endpoint $0 = c_1$ is contained in the nodes, non-negativity of
  all $a_{i,j}$ and $C(1)$ imply $\forall j \in \set{1,\dots,s}\colon a_{1,j} = 0$.
  Similarly, if the right endpoint $c_s = 1$ is contained in the nodes, non-negativity of
  all $a_{i,j}$ and $D(1)$ imply $\forall i \in \set{1,\dots,s}\colon a_{i,s} = 0$.
  But $A$ cannot have a zero row or column because it is invertible.
\end{proof}
\begin{remark}
\label{rem:SSP}
  Classical finite difference SBP operators and those based on Radau or Lobatto
  quadrature include at least one endpoint and can thus not result in SSP schemes.
  The SBP SAT scheme \eqref{eq:discretisation} on two Gauss nodes does not contain
  an endpoint and has a first order accurate derivative operator. Nevertheless,
  the scheme is not SSP, since the corresponding matrix
  $A$ has a negative entry.
\end{remark}

\begin{example}
\label{ex:SSP}
  There is a first order accurate SBP operator with diagonal norm matrix not
  including any boundary node such that the resulting Runge-Kutta method given
  by Theorem~\ref{thm:SBP-SAT-as-RK} is SSP. Indeed, choose $T=1$ and
  \begin{equation}
  \begin{gathered}
    D = \frac{1}{128}
    \begin{pmatrix}
      -2079 & 3646 & -1567 \\
      271 & -1054 & 783 \\
      -479 & 446 & 33 \\
    \end{pmatrix},
    \quad
    M = \frac{1}{4}
    \begin{pmatrix}
      1 & 0 & 0 \\
      0 & 2 & 0 \\
      0 & 0 & 1 \\
    \end{pmatrix},
    \quad
    t_L =
    \begin{pmatrix}
      3 \\ -3 \\ 1
    \end{pmatrix},
    \quad
    t_R = \frac{1}{16}
    \begin{pmatrix}
      -15 \\ 14 \\ 17
    \end{pmatrix},
    \\
    c = \frac{1}{4}
    \begin{pmatrix}
      1 \\ 2 \\ 3
    \end{pmatrix},
    \quad
    A = (D + M^{-1} t_L t_L^T)^{-1} = \frac{1}{\num{20000}}
    \begin{pmatrix}
      2725 & 2180 & 95 \\
      4390 & 5512 & 98 \\
      3495 & 6796 & 4709 \\
    \end{pmatrix}.
  \end{gathered}
  \end{equation}
  The operators $D, t_L, t_R$ are exact for polynomials of degree one,
  assumption~\ref{assumption} has been verified numerically for $\sigma \in (1/2,2)$,
  $A$ and $b$ have only non-negative entries, and the scheme is strong stability
  preserving with SSP coefficient $\approx 1.35$, computed using NodePy \cite{nodepy}.
\end{example}

\section*{Acknowledgements}

This work was supported by the German Research Foundation (DFG, Deutsche
Forschungsgemeinschaft) under Grant SO~363/14-1.
We acknowledge support by the German Research Foundation and the Open Access
Publication Funds of the Technische Universität Braunschweig.
The author would like to thank the anonymous reviewers for their helpful comments
and valuable suggestions to improve this article.

\appendix

\printbibliography

\end{document}